\numberwithin{equation}{section} \makeatletter
\renewcommand{\subsection}{\@startsection
{subsection}{2}{0mm}{\baselineskip}{-0.25cm}
{\normalfont\normalsize\bf}} \makeatother
\newcommand{\be}{\begin{eqnarray}}
\newcommand{\ee}{\end{eqnarray}}
\newcommand{\ben}{\begin{eqnarray*}}
\newcommand{\een}{\end{eqnarray*}}
\numberwithin{equation}{section}
\newtheorem{theorem}{Theorem}[section]
\newtheorem{proposition}[theorem]{Proposition}
\newtheorem{corollary}[theorem]{Corollary}
\newtheorem{lemma}[theorem]{Lemma}
\theoremstyle{definition}
\newtheorem{remark}[theorem]{Remark}
\def\cD{\mathcal D}
\def\cL{\mathcal L}
\def\cX{\mathcal X}
\def\fq{{\mathbb F}_q}
\def\deg{{\rm deg}}
\def\fq{{\mathbb F}_q}
\def\fqq{{\mathbb F}_{q^2}}
\begin{document}

\title[]{On some open problems on maximal curves}

\author{Stefania~Fanali~and~Massimo~Giulietti}
\thanks{S. Fanali and M. Giulietti are with the Dipartimento di Matematica e Informatica, Universit\`a di  Perugia,
Via Vanvitelli 1, 06123, Perugia,
Italy (e-mail: stefania.fanali@dipmat.unipg.it;  giuliet@dipmat.unipg.it)}
\thanks{This research was performed within the activity of GNSAGA of the
Italian INDAM.}
\maketitle

        \begin{abstract} In this paper we solve three open problems on maximal curves with Frobenius dimension $3$.  In particular, we prove the existence of a maximal curve with order sequence $(0,1,3,q)$.
        \end{abstract}

\maketitle

\section{Introduction}

Let $\fqq$ be a finite field  with $q^2$ elements where $q$ is a
power of a prime $p$. An $\fqq$-rational curve, that is a
projective, geometrically
absolutely irreducible, non-singular algebraic curve defined over
$\fqq$, is called $\fqq$-maximal if the number of its
$\fqq$-rational points attains the  Hasse-Weil upper bound
$$q^2+1+2gq$$
where $g$ is the genus of the curve. Maximal curves have
interesting properties and have also been investigated for their
applications in Coding theory. Surveys on maximal curves are found
in
\cite{garcia2001sur,garcia2002,garcia-stichtenoth1995ieee,geer2001sur,geer2001nato}
and \cite[Chapter 10]{HKT}; see also
\cite{FGT,ft,garcia-stichtenoth2007book,r-sti,sti-x}.

For an $\fqq$-maximal curve $\cX$, the {\em Frobenius linear series}
is the complete linear series $\cD=|(q+1)P_0|$, where $P_0$ is any
$\fqq$-rational point of $\cX$. The projective dimension $r$ of the
Frobenius linear series, called the {\em Frobenius dimension} of
$\cX$, is one of the most important birational invariants of maximal
curves. No maximal curve with Frobenius dimension $1$ exists, whereas the
Hermitian curve is the only maximal curve with Frobenius dimension
$2$. Maximal curves with higher Frobenius dimension have small genus, see Proposition \ref{castel}.

In this paper, we deal with some open problems concerning maximal curves $\cX$ with Frobenius
dimension $3$. For $P \in \cX$ denote by $j_i(P)$ the $i$-th $(\cD,P)$-order and by $\epsilon_i$ the $i$-th $\cD$-order ($i=0,\ldots, 3$). For $i\neq 2$, the values of $\epsilon_i$ and $j_i(P)$ are known, see e.g. \cite[Prop. 10.6]{HKT}. More precisely, $\epsilon_0=0$, $\epsilon_1=1$ and $\epsilon_3=q$; for an $\fqq$-rational point $P\in \cX$, $j_0(P)=0$, $j_1(P)=1$, $j_3(P)=q+1$; for a non-$\fqq$-rational point $P$,  $j_0(P)=0$, $j_1(P)=1$, $j_3(P)=q$.

 In 1999, Cossidente, Korchm\'aros and Torres \cite{CKT1} proved that $\epsilon_2$ is either $2$ or $3$, and that if the latter case holds then $p=3$. They also showed that for an $\fqq$-rational point $P\in \cX$ only a few possibilities for $j_2(P)$ can occur, namely
$$
j_2(P)\in \left\{2,3,q+1-\left\lfloor \frac{1}{2}(q+1)\right\rfloor,q+1-\left\lfloor \frac{2}{3}(q+1)\right\rfloor\right\}.
$$
In \cite{CKT1} it was asked whether the following three cases actually occur for maximal curves with Frobenius dimension $3$:
\begin{itemize}
\item[(A)] $\epsilon_2=3$;

\item[(B)] $\epsilon_2=2$, $j_2(P)=3$ for some  $\fqq$-rational point $P$; 

\item[(C)] $\epsilon_2=2$, $j_2(P)=q+1-\left\lfloor \frac{2}{3}(q+1)\right\rfloor$ for some  $\fqq$-rational point $P$.

\end{itemize}

The main result of the paper is the proof that the recently discovered GK-curve \cite{GK} defined over ${\mathbb F}_{27^2}$ provides an affirmative answer to question (A), see Theorem \ref{3}. It is also shown that the curve of equation $
Y^{16}=X(X+1)^6$ defined over ${\mathbb F}_{49}$ provides an affirmative answer to both questions (B) and (C), see Theorem \ref{7}.
Finally, in Section \ref{finale} we construct an infinite family of maximal curves with $\cD$-orders $(0,1,2,q)$  having an $\fqq$-rational point $P$ with 
$j_2(P)=3$, see Theorem \ref{nuovo1}.

It should be noted that in \cite[Section 4]{AT} it is pointed out that due to some results by Homma and Hefez-Kakuta, an interesting geometrical property of a maximal curves $\cX$ with Frobenius dimension $3$ with 
$\epsilon_2=3$ is that of being a non-reflexive space curve of degree $q+1$ whose tangent surface is
also non-reflexive. 

The language of function fields will be used throughout the paper. The points of a maximal curve $\cX$ will be then identified with the places of the function field $\fqq(\cX)$. Places of degree one correspond to $\fqq$-rational points.

\section{Preliminaries}
Throughout the paper, $p$ is a prime number, $q=p^n$ is some power of $p$, $K={\mathbb F}_{q^2}$ is the finite
field with $q^2$ elements, $F$ is a function field over $K$ such that  $K$ is algebraically closed in $F$,
$g(F)$ is the genus of $F$, $N(F)$ is the number of places of degree $1$ of $F$, ${\mathbb P}(F)$ is the set
of all places of $F$.

For a place $P$ of degree $1$, let $H(P)$ be the Weierstrass semigroup at $P$, that is, the set of non-negative integers $i$ for which there exists $\alpha \in F$ such that the pole divisor $(\alpha)_\infty$ is equal to $iP$.

For a divisor $D$ of $F$, let $\cL(D)$ be the Riemann-Roch space of $D$, see e.g. \cite[Def. 1.4.4]{STI}. The set of effective divisors
$
|D|=\{\alpha+D\mid \alpha \in \cL(D)\}
$
is the complete linear series associated to $D$. The degree $n$ of $|D|$ is the degree of $D$, whereas the dimension $r$ of $|D|$ is the dimension of the $K$-linear space $L(D)$ minus $1$.

We recall some facts on orders of linear series, for which we refer to \cite[Section 7.6]{HKT}.
For a place $P$ of $F$, an integer $j$ is a $(|D|,P)$-order if there exists a divisor $E$ in $|D|$ with $v_P(E)=j$. There are exactly $r+1$ orders
$$
j_0(P)<j_1(P)<\ldots<j_r(P),
$$
and $(j_0(P),j_1(P),\ldots,j_r(P))$ is said to be the $(|D|,P)$-order sequence. For all but a finite number of places the $(|D|,P)$-order sequence is the same. Let $(\epsilon_0,\ldots,\epsilon_r)$ be the generic $(|D|,P)$-order sequence, called the $|D|$-order sequence. In general, $j_i(P)\ge \epsilon_i$. The so-called {\em $p$-adic criterion} (see e.g. \cite[Lemma 7.62]{HKT}) states that if $\epsilon<p$ is a $|D|-$order, then $0,1,\ldots,\epsilon-1$ are also $|D|$-orders.

Let $F$ be a maximal function field, that is, $N(F)=q^2+1+2gq$. 
For a place $P_0$ of degree $1$, let $\cD=|(q+1)P_0|$ be the Frobenius linear series of $F$. By the so-called fundamental equation (see e.g. \cite[Section 9.8]{HKT})
the linear series $\cD$ does not depend on the choice of $P_0$.  The dimension $r$ of $\cD$ is the Frobenius dimension of $F$.
Some facts on the Frobenius linear series of a maximal function field are collected in the following proposition (see \cite[Prop. 10.6]{HKT}).
\begin{proposition}\label{collect} Let $\cD$ be the Frobenius linear series of a maximal function field $F$, and let $(\epsilon_0,\ldots,\epsilon_r)$ be the order sequence of $\cD$. For a place $P$ of degree $1$, let 
$$
H(P)=\{0=m_0(P)<m_1(P)<m_2(P)<\ldots\}.
$$
\begin{itemize}
\item[(a)] $m_r(P)=q+1$, $m_{r-1}(P)=q$.
\item[(b)] The $(\cD,P)$-orders at a place $P$ of degree $1$ are the terms of the sequence
$$
0<1<q+1-m_{r-2}(P)<\ldots<q+1-m_1(P)<q+1.
$$
\item[(c)] $\epsilon_0=0$, $\epsilon_1=1, \epsilon_r=q$.
\end{itemize}

\end{proposition}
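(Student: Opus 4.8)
The plan is to reduce everything to the Weierstrass semigroup $H(P)$ at a place of degree one via the fundamental equation, and then to read off the $(\cD,P)$-orders from the non-gaps that are $\le q+1$. First I would record the fundamental equation (see \cite[Section 9.8]{HKT}): for every place $P$ one has $\frx(P)+qP\sim (q+1)P_0$, where $\frx$ denotes the $\fqq$-Frobenius. Specialising to a place $P$ of degree one, where $\frx(P)=P$, this reads $(q+1)P\sim (q+1)P_0$; hence $\cD=|(q+1)P|$, and picking any second rational place $P'\ne P$, the function $w$ with $\div(w)=(q+1)P'-(q+1)P$ has pole divisor $(q+1)P$, so that $q+1\in H(P)$.

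Next I would set up the dictionary between orders and non-gaps. Since $\cD=|(q+1)P|$, an integer $k$ with $0\le k\le q+1$ is a $(\cD,P)$-order if and only if $\dim\cL((q+1-k)P)>\dim\cL((q-k)P)$, that is, if and only if $q+1-k\in H(P)$. Thus the $(\cD,P)$-orders are exactly the integers $(q+1)-m$ with $m\in H(P)$ and $m\le q+1$, and since there are $r+1$ of them, $H(P)\cap[0,q+1]=\{m_0<\dots<m_r\}$ has exactly $r+1$ elements. As $0\in H(P)$ the largest order is $j_r(P)=q+1$, and as $q+1\in H(P)$ the smallest is $j_0(P)=0$; the latter gives $m_r(P)=q+1$, which is half of (a). Listing the orders increasingly then yields $j_i(P)=(q+1)-m_{r-i}(P)$, which is precisely the sequence in (b) once the single remaining fact $m_{r-1}(P)=q$ is known.

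The heart of the argument, and the step I expect to be the main obstacle, is this remaining equality $m_{r-1}(P)=q$ --- equivalently $j_1(P)=1$, equivalently $q\in H(P)$ --- for \emph{every} rational place, not merely the generic one. The clean route uses the general form of the $p$-adic criterion (\cite[Lemma 7.62]{HKT}): if $j$ is a $(\cD,P)$-order and $\binom{j}{k}\not\equiv 0\pmod p$, then $k$ is a $(\cD,P)$-order. Applying this to the top order $j_r(P)=q+1$ and $k=1$, and using $q=p^n$, we have $\binom{q+1}{1}=q+1\equiv 1\not\equiv 0\pmod p$, so $1$ is itself a $(\cD,P)$-order. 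Since $j_0(P)=0$, this forces $j_1(P)=1$, hence $m_{r-1}(P)=q$ by the dictionary. This completes (a) and (b).

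Finally I would deduce (c). The value $\epsilon_0=0$ is immediate from the non-degeneracy of $\cD$ in $\mathbb P^r$. Since $j_i(P)\ge\epsilon_i$ and $\epsilon_1>\epsilon_0=0$, the equality $j_1(P)=1$ just proved gives $1\le\epsilon_1\le 1$, so $\epsilon_1=1$. For $\epsilon_r=q$ I would use the fundamental equation at a generic place $P$ that is \emph{not} $\fqq$-rational, so that $\frx(P)\ne P$: then $qP+\frx(P)\sim(q+1)P_0$ is an effective divisor in $\cD$ with $v_P=q$, so $q$ is a generic $\cD$-order and $\epsilon_r\ge q$. On the other hand a hyperplane section has degree $q+1$, so $\epsilon_r\le q+1$; and $\epsilon_r=q+1$ would make the osculating hyperplane at a generic $P$ meet the curve only at $P$, forcing $(q+1)P\sim(q+1)P_0$ for all but finitely many places $P$, which is impossible since only finitely many places are $(q+1)$-torsion over $P_0$. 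Hence $\epsilon_r=q$.
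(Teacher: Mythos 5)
First, a point of comparison: the paper offers no proof of this statement at all --- Proposition \ref{collect} is quoted from \cite[Prop.~10.6]{HKT} (the result goes back to Fuhrmann--Garcia--Torres \cite{FGT}) --- so your attempt can only be measured against the standard proof in those references. Most of your reduction is correct and is essentially what that proof does: the fundamental equation gives $(q+1)P\sim(q+1)P_0$ for rational $P$, hence $\cD=|(q+1)P|$ and $q+1\in H(P)$; the dictionary ``$k$ is a $(\cD,P)$-order iff $q+1-k\in H(P)$'' is right and yields $m_r(P)=q+1$, the formula $j_i(P)=q+1-m_{r-i}(P)$, and part (b) modulo the value of $m_{r-1}(P)$; and your derivation of $\epsilon_r=q$ from the divisor $qQ+\frx(Q)$ at non-rational $Q$ together with the torsion argument ruling out $\epsilon_r=q+1$ is sound.

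The genuine gap is exactly at the step you yourself single out as the heart of the argument, namely $m_{r-1}(P)=q$, i.e.\ $j_1(P)=1$ for \emph{every} rational place. You derive it from a ``pointwise'' $p$-adic criterion: if $j$ is a $(\cD,P)$-order and $\binom{j}{k}\not\equiv 0\pmod p$, then $k$ is a $(\cD,P)$-order. No such theorem exists: \cite[Lemma~7.62]{HKT} (St\"ohr--Voloch's Corollary 1.9) concerns only the \emph{generic} order sequence $(\epsilon_0,\dots,\epsilon_r)$, its proof resting on the composition rule for Hasse derivatives and the minimality of the Wronskian orders, neither of which applies to the orders at an individual place. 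The pointwise statement is in fact false, and the present setting already contains a counterexample: for the Hermitian function field the Frobenius linear series has $(\cD,P)$-orders $0,1,q+1$ at a rational place $P$, and $\binom{q+1}{q}=q+1\equiv 1\pmod p$, yet $q$ is not a $(\cD,P)$-order there. (More simply, a $g^1_d$ in odd characteristic has $(j_0,j_1)=(0,2)$ at a simple ramification point although $\binom{2}{1}\not\equiv 0\pmod p$.) Your particular application ($j=q+1$, $k=1$) happens to land on a true conclusion, but by an invalid inference, and with it falls your derivation of $\epsilon_1=1$, which you deduce from $j_1(P)=1$. The fact that $q\in H(P)$ at every rational place is one of the genuinely non-formal ingredients of \cite[Prop.~10.6]{HKT} and needs a real argument; note that the trick which works at a non-rational place $Q$ --- the divisor $q\,\frx^{-1}(Q)+Q\in\cD$ has valuation $1$ at $Q$, so $j_1(Q)=1$ --- collapses when $Q$ is rational, since then $\frx^{-1}(Q)=Q$.
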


The only maximal function field with Frobenius dimension $2$ is the Hermitian function field $H=K(x,y)$ with $y^{q+1}=x^q+x$, see e.g. \cite[Remark 10.23]{HKT}. Maximal function fields with Frobenius dimension $3$ were investigated in \cite{CKT1}. Corollary 3.5 in  \cite{CKT1} states that if $\epsilon_2=2$, then for any place $P$ of degree $1$
$$
j_2(P)\in \left \{2,3,q+1-\left\lfloor \frac{1}{2}(q+1)\right\rfloor, q+1-\left\lfloor \frac{2}{3}(q+1)\right\rfloor\right\}.
$$
For each value of $q$ there exists a unique maximal function field such that $j_2(P)=q+1-\left\lfloor \frac{1}{2}(q+1)\right\rfloor$ holds for some place $P$ (see \cite[Remark 3.6]{CKT1}). A number of examples for which $j_2(P)=2$ occurs are known, see \cite[Chapter 10]{HKT}. So far, no example of a maximal function field with Frobenius dimension $3$ having a place $P$ of degree $1$ with $j_2(P)\in \left \{3, q+1-\left\lfloor \frac{2}{3}(q+1)\right\rfloor\right\}$ appears to have been known in the literature (see \cite[Remark 3.9]{CKT1}, \cite[Section 4]{AT}).

A result from \cite{CKT1} that will be useful in the sequel is the following.
\begin{lemma}\cite[Lemma 3.7]{CKT1}\label{fern} If the Frobenius dimension of a maximal function field is $3$, then there exists a place $P$ of degree $1$ with $j_2(P)=\epsilon_2$.
\end{lemma}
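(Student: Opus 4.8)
The plan is to show that the degree-one places $P$ with $j_2(P)\neq\epsilon_2$ are comparatively few, in fact fewer than $N(F)$, so that at least one degree-one place must realize the generic value $j_2(P)=\epsilon_2$. Recall that $j_2(P)\ge\epsilon_2$ for every place, so the condition to avoid is $j_2(P)>\epsilon_2$. By Proposition \ref{collect} every degree-one place has $\cD$-order sequence $(0,1,j_2(P),q+1)$, while the generic sequence is $(0,1,\epsilon_2,q)$; thus among the first three orders only the second one can exceed its generic value. The key point is to detect this second-order excess by a ramification count that is \emph{decoupled} from the systematic excess of the top order (which every rational place exhibits, since $j_3(P)=q+1>q=\epsilon_3$), because the full ramification divisor of $\cD$ automatically contains all $N(F)$ rational places and is therefore useless here.

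To isolate the second order I would pass from the space model to a plane model. Embedding $F$ in $\PP^3$ by $\cD$, project from a general point $O\in\PP^3$ to obtain a two-dimensional subseries $\cD'\subset\cD$ of degree $q+1$, cut out by the hyperplanes through $O$. For a general center the generic $\cD'$-order sequence is the truncation $(0,1,\epsilon_2)$ of the generic $\cD$-sequence, so that $\epsilon_i'=\epsilon_i$ for $i\le 2$; and at each of the finitely many degree-one places $P$ the center $O$ avoids the osculating plane of $\cD$ at $P$, so the projection merely deletes the top order $j_3(P)$ and leaves the $\cD'$-order sequence $(0,1,j_2(P))$ at $P$. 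In particular $j_2'(P)=j_2(P)$ for every degree-one place $P$, where $j_i'(P)$ denotes the $(\cD',P)$-orders.

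Next comes the ramification count for $\cD'$. The ramification divisor $R'$ of the base-point-free plane series $\cD'$ has
\[
\deg R'=(\epsilon_0'+\epsilon_1'+\epsilon_2')(2g-2)+3(q+1)=(1+\epsilon_2)(2g-2)+3(q+1),
\]
and $v_P(R')\ge\sum_{i=0}^{2}\bigl(j_i'(P)-\epsilon_i'\bigr)=j_2(P)-\epsilon_2$ at each place. Hence every degree-one place with $j_2(P)>\epsilon_2$ lies in $\supp(R')$, so their number is at most $\deg R'$. It then suffices to verify
\[
(1+\epsilon_2)(2g-2)+3(q+1)<q^2+1+2gq=N(F).
\]
Since $\epsilon_2\in\{2,3\}$, and in the case $\epsilon_2=3$ one has $p=3$ together with $\epsilon_2<\epsilon_3=q$, forcing $q\ge 9$, the coefficient $2(1+\epsilon_2-q)$ of $g$ is non-positive for all admissible $q\ge 3$; the difference of the two sides is therefore non-increasing in $g$, and its value at $g=0$ is already negative. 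Thus strictly fewer than $N(F)$ degree-one places have $j_2(P)>\epsilon_2$, and at least one degree-one place satisfies $j_2(P)=\epsilon_2$.

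The main obstacle is the general-position argument underlying the projection: one must produce a single center $O$ that simultaneously preserves the generic second order (so that $\epsilon_2'=\epsilon_2$) and preserves $j_2$ at \emph{all} $N(F)$ degree-one places (so that the projection deletes only the top order). This is handled by observing that, for each relevant place, the set of forbidden centers is a proper closed subset of $\PP^3$, and a finite union of such subsets cannot exhaust $\PP^3$, so a suitable $O$ exists over $\overline{\fqq}$ (which suffices, as orders and ramification are geometric). The only real delicacy is the characteristic-$p$ bookkeeping of how the orders of a linear series transform under a generic projection, which is what guarantees that the deleted order is $j_3(P)$ rather than $j_2(P)$.
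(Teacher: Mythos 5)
The paper itself gives no proof of this lemma---it is quoted from \cite[Lemma 3.7]{CKT1}---so the comparison is with the standard argument there, which is a direct ramification-divisor count for the full series $\cD$. Your proof uses the same underlying tool (the St\"ohr--Voloch ramification divisor and the local bound $v_P(R)\ge\sum_i(j_i(P)-\epsilon_i)$) and the same kind of degree-versus-$N(F)$ comparison, and the counting at the end is arithmetically correct. But your starting premise, that the ramification divisor of $\cD$ itself is ``useless'' because every rational place already lies in its support, is mistaken, and it is what drives you into the projection detour. The direct argument works: if \emph{every} degree-one place had $j_2(P)>\epsilon_2$, then each such place would contribute $v_P(R)\ge (j_2(P)-\epsilon_2)+(j_3(P)-\epsilon_3)\ge 2$, so $2N(F)\le \deg R=(1+\epsilon_2+q)(2g-2)+4(q+1)$; this rearranges to $2q^2-2q+2\epsilon_2\le 2g(1+\epsilon_2-q)$, whose right-hand side is $\le 0$ because $\epsilon_2<\epsilon_3=q$, a contradiction. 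No projection, no choice of center, no characteristic-$p$ subtleties.

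Your route can be completed, but the cost is exactly the ``delicacy'' you defer at the end, and it is not negligible: you must show that a single center $O$ (over $\overline{\fqq}$) simultaneously (i) avoids the osculating planes at all $N(F)$ rational places, so that the projection deletes only $j_3(P)$ and $j_2'(P)=j_2(P)$ there, and (ii) lies on only \emph{finitely many} osculating planes in total, so that the generic $\cD'$-order sequence is the truncation $(0,1,\epsilon_2)$ and $\epsilon_2'=\epsilon_2$. Condition (ii) is not a finite union of proper closed conditions as you assert; it needs a dimension count on the incidence variety $\{(P,O): O\in L_2(P)\}\subset \cX\times\PP^3$ (which is $3$-dimensional, so the locus of centers with infinite fiber has dimension at most $2$). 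This is genuinely a characteristic-$p$-sensitive point---the paper even remarks that in case (A) these curves are non-reflexive space curves with non-reflexive tangent surface---so leaving it as ``bookkeeping'' is the one real gap in the write-up, even though the statement needed is true. In short: same engine as the cited proof, but an avoidable and only partially justified reduction bolted on in front of it.
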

Maximal function fields with higher Frobenius dimension have smaller genus, as stated in the next result.
\begin{proposition}\cite[Corollary 10.25]{HKT}\label{castel}
The genus $g$ of a maximal function field with Frobenius dimension $r$ is such that
$$
g\le \left\{
\begin{array}{ll}
\frac{(2q-(r-1))^2-1}{8(r-1)} & \quad\text{ if r is even,}\\
&  \\
\frac{(2q-(r-1))^2}{8(r-1)} & \quad\text{ if r is odd.}
\end{array}
\right.
$$

\end{proposition}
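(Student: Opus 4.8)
The plan is to realise the maximal function field $F$ geometrically through its Frobenius linear series and then apply Castelnuovo's classical genus bound. First I would record that $\cD=|(q+1)P_0|$ has degree $q+1$ and projective dimension $r$, and that it is base-point-free and simple; consequently the associated morphism maps $\cX$ birationally onto an irreducible, nondegenerate curve $\cC\subset\PP^r$ of degree $q+1$. Establishing this simplicity — so that the image really has degree $q+1$, and Castelnuovo's bound can be invoked with $d=q+1$ rather than with a proper divisor of it — is the one genuinely non-elementary input, and I expect it to be the main obstacle; for maximal curves it follows from the fundamental properties of the Frobenius linear series collected in Proposition \ref{collect}.

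With $\cC$ in hand, I would apply Castelnuovo's bound: an irreducible nondegenerate curve of degree $d$ in $\PP^r$ has genus
$$
g\le \binom{m}{2}(r-1)+m\,\epsilon,\qquad d-1=m(r-1)+\epsilon,\quad 0\le\epsilon<r-1.
$$
Here $d-1=q$, so $m=\lfloor q/(r-1)\rfloor$ and $\epsilon=q-m(r-1)$. Substituting this value of $\epsilon$ and simplifying collapses the right-hand side into a quadratic in the single integer variable $m$,
$$
g\le f(m):=mq-\tfrac12(r-1)\,m(m+1).
$$

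The remaining work is elementary optimisation of $f$. Completing the square yields the identity
$$
f(m)=\frac{(2q-(r-1))^2}{8(r-1)}-\frac{r-1}{2}\bigl(m-m^*\bigr)^2,\qquad m^*=\frac{2q-(r-1)}{2(r-1)},
$$
so $f(m)$ never exceeds the continuous maximum $V=(2q-(r-1))^2/\bigl(8(r-1)\bigr)$. This is already the asserted bound in the case $r$ odd.

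For the case $r$ even I would exploit the integrality of $g$. Since $2(r-1)\,m^*=2q-(r-1)$ is an odd integer while $2(r-1)$ is even, $m^*$ is not an integer; hence for every integer $m$ the quantity $2(r-1)(m-m^*)$ is a nonzero integer, giving $|m-m^*|\ge 1/(2(r-1))$. Feeding this into the displayed identity produces
$$
f(m)\le V-\frac{r-1}{2}\cdot\frac{1}{4(r-1)^2}=\frac{(2q-(r-1))^2}{8(r-1)}-\frac{1}{8(r-1)}=\frac{(2q-(r-1))^2-1}{8(r-1)},
$$
which is exactly the even-$r$ estimate and completes the argument.
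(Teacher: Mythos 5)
The paper states this proposition without proof, simply importing it from \cite[Corollary 10.25]{HKT}; your argument is precisely the standard derivation given in that source --- apply Castelnuovo's genus bound to the nondegenerate degree-$(q+1)$ model of the curve in $\PP^r$ furnished by the base-point-free, simple Frobenius linear series, reduce the right-hand side to the quadratic $f(m)=mq-\tfrac12(r-1)m(m+1)$, and bound it by completing the square, using the integrality of $m$ (and the parity of $2q-(r-1)$) to gain the extra $\tfrac{1}{8(r-1)}$ when $r$ is even. Your computations are correct, so there is nothing to add.
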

\section{The $\cD$-order sequence of the GK function field}

Throughout this section, we assume that $q={\bar q}^3$ with ${\bar q}$ a prime power.
Let $F$ be the function field $K(x,y)$, where $y^{{\bar q}+1}=x^{\bar q}+x$. Let $u=y\frac{x^{{\bar q}^2-1}-1}{x^{{\bar q}-1}+1}$, and
consider the field extension
$F(z)/F $ where $z^{{\bar q}^2-{\bar q}+1}=u$. The GK function field is
\begin{equation}
{\bar F}=F(z).
\end{equation}


We first recall some proprieties of ${\bar F}$, for which we refer to \cite[Section~2]{GK}.
The function field ${\bar F}$ is a Kummer extension of $F$, and in particular ${\bar F}/F$ is Galois of degree ${\bar q}^2-{\bar q}+1$.
The Galois group $\Gamma$ of ${\bar F}/F$ consists of all the automorphisms $g_u$ of ${\bar F}$ such that
$$
g_u(x)=x, \qquad g_u(y)=y, \qquad g_u(z)=uz,
$$
with $u^{\bar{q}^{2}-\bar{q}+1}=1$.

The function field ${\bar F}$ is $\fqq$-maximal.
Significantly, for $q>8$, ${\bar F}$ is the only known function field that is maximal but not a subfield of the Hermitian function field (see \cite[Theorem~5]{GK}). The genus of ${\bar F}$ is $$g=\frac{1}{2}(\bar{q}^{3}+1)(\bar{q}^{2}-2)+1.$$
Also, the only common pole of $x,y$ and $z$ is a place $P_0$ of degree $1$ for which  $$\cL((q+1)P_0)=<1,x,y,z>.$$ Therefore the Frobenius linear series $\cD$ consists of divisors
$$
\cD=\{div(a_0+a_1x+a_2y+a_3z)+(q+1)P_0\mid (a_0,a_1,a_2,a_3)\in K^4, (a_0,a_1,a_2,a_3)\neq(0,0,0,0)\}.
$$

Let $P'$ be any place of degree $1$ of ${\bar F}$. Let $P$ be the place of $F$ lying under $P'$. Then
\begin{equation}
\left\{ \begin{array}{l}e(P'\mid P)={\bar q}^2-{\bar q}+1, \quad \text{if }P\text{ is either a zero or a pole of  }z\\ e(P'\mid P)=1, \quad
\text{otherwise}\end{array}\right..
\end{equation}

We now describe the $(\cD,P')$-orders for a place $P'$ of degree $1$ of ${\bar F}$.

%
\begin{proposition}\cite[Section 4]{GK}\label{0}
If  $P'$ is such that $e(P'\mid P)={\bar q}^2-{\bar q}+1$, then the Weierstrass semigroup at $P$ is the subgroup generated by $\bar{q}^{3}-\bar{q}^{2}+\bar{q}$, $\bar{q}^{3}$, and $\bar{q}^{3}+1$.
\end{proposition}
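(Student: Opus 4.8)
The plan is to exhibit functions in ${\bar F}$ whose pole orders at $P'$ are exactly the three proposed generators, and then to force equality by matching the number of gaps of the semigroup they generate with the genus $g$. It suffices to treat the common pole $P_0$, which lies over the place $P_\infty$ of $F$ at infinity and satisfies $e(P_0\mid P_\infty)={\bar q}^2-{\bar q}+1$ because $P_0$ is a pole of $z$; the remaining degree-one places with this ramification (those over the zeros of $z$) are then handled either by transitivity of $\Aut({\bar F})$ or by an identical valuation computation. In the Hermitian field $F$ one has $v_{P_\infty}(x)=-({\bar q}+1)$ and $v_{P_\infty}(y)=-{\bar q}$. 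Computing the divisor of $u=y\frac{x^{{\bar q}^2-1}-1}{x^{{\bar q}-1}+1}$ — here the zeros of the denominator $x^{{\bar q}-1}+1$ are cancelled by zeros of the numerator, so that $u$ has its only pole at $P_\infty$ — gives $v_{P_\infty}(u)=-{\bar q}^3$. Since $z^{{\bar q}^2-{\bar q}+1}=u$ and $P_0$ is totally ramified, $v_{P_0}(z)=-{\bar q}^3$, while multiplying the valuations of $x$ and $y$ by $e(P_0\mid P_\infty)$ and using $({\bar q}+1)({\bar q}^2-{\bar q}+1)={\bar q}^3+1$ yields $v_{P_0}(x)=-({\bar q}^3+1)$ and $v_{P_0}(y)=-({\bar q}^3-{\bar q}^2+{\bar q})$. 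Hence the numerical semigroup $S=\langle {\bar q}^3-{\bar q}^2+{\bar q},\,{\bar q}^3,\,{\bar q}^3+1\rangle$ is contained in $H(P_0)$.

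For the reverse inclusion I would show that $S$ has exactly $g$ gaps. Since the generators ${\bar q}^3$ and ${\bar q}^3+1$ are consecutive, the cleanest route is to compute the Ap\'ery set of $S$ with respect to $m={\bar q}^3$: for each residue class modulo ${\bar q}^3$ one determines the least element of $S$ in that class by writing a general element as $\alpha({\bar q}^3-{\bar q}^2+{\bar q})+\gamma({\bar q}^3+1)$ and reducing modulo ${\bar q}^3$, where $\alpha$ contributes the residue $\alpha({\bar q}-{\bar q}^2)$ and $\gamma$ contributes $\gamma$. The relation $\gcd({\bar q}^2-{\bar q},{\bar q}^3)={\bar q}$ shows that the residues $\alpha({\bar q}-{\bar q}^2)$ fill the index-${\bar q}$ subgroup of $\ZZ/{\bar q}^3\ZZ$, after which the $\gamma$-shifts recover every class. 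Summing the Ap\'ery values and applying Selmer's formula $\#(\NN\setminus S)=\frac1m\sum_{w\in\mathrm{Ap}(S,m)}w-\frac{m-1}{2}$, I would check that the result equals $g=\frac12({\bar q}^3+1)({\bar q}^2-2)+1$. Equivalently, one may use the monomial basis $\{x^iy^jz^k: i\ge0,\ 0\le j\le {\bar q},\ 0\le k\le {\bar q}^2-{\bar q}\}$ of the functions regular outside $P_0$, whose pole orders $i({\bar q}^3+1)+j({\bar q}^3-{\bar q}^2+{\bar q})+k{\bar q}^3$ run over $S$, and count directly.

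Once $\#(\NN\setminus S)=g$ is established, the argument closes immediately: the Weierstrass semigroup $H(P_0)$ has exactly $g$ gaps, and since $S\subseteq H(P_0)$ forces every gap of $H(P_0)$ to be a gap of $S$, the two gap-sets coincide and therefore $S=H(P_0)$. I expect the genuine work to lie entirely in the gap count of the second paragraph; the pole-order computation is routine once the cancellation in the divisor of $u$ is noticed, and the final comparison is a one-line consequence of the equality of gap-numbers.
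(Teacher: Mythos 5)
The paper offers no proof of this statement---it is quoted from \cite[Section 4]{GK}---and your reconstruction is correct and follows essentially the same route as that source: the pole orders of $x$, $y$, $z$ at the totally ramified place $P_0$ over $P_\infty$ are ${\bar q}^3+1$, ${\bar q}^3-{\bar q}^2+{\bar q}$, ${\bar q}^3$, and the Ap\'ery--Selmer gap count of $S=\langle {\bar q}^3-{\bar q}^2+{\bar q},\,{\bar q}^3,\,{\bar q}^3+1\rangle$ does come out to $\tfrac12({\bar q}^3+1)({\bar q}^2-2)+1=g({\bar F})$, which together with $S\subseteq H(P_0)$ forces equality. The only points worth writing out in full are the check that $u$ (hence $z$) has no pole outside $P_\infty$---at a point with $x=c$, $c^{{\bar q}-1}=-1$ one has $y=0$ and $v(x-c)={\bar q}+1$, so the zeros of the denominator are exactly cancelled and in fact $v(u)=1$ there---and the reduction of the general totally ramified $P'$ to $P_0$, which should invoke the transitivity of $\Aut({\bar F})$ on the ${\bar q}^3+1$ totally ramified degree-one places (established in \cite{GK}) rather than an ``identical valuation computation,'' since at a zero of $z$ the relevant functions with a unique pole would have to be produced afresh.
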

>From (b) of Proposition \ref{collect} the following corollary is obtained.
\begin{corollary}
If  $P'$ is such that $e(P'\mid P)={\bar q}^2-{\bar q}+1$, then
$$
(j_0(P'),j_1(P'),j_2(P'),j_3(P'))=(0,1,{\bar q}^2-{\bar q}+1,\bar{q}^{3}+1)
$$
\end{corollary}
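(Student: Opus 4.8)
The plan is to read off the $(\cD,P')$-orders directly from part (b) of Proposition \ref{collect}, the only substantive input being the explicit Weierstrass semigroup furnished by Proposition \ref{0}. Since $\cL((q+1)P_0)=\langle 1,x,y,z\rangle$, the Frobenius dimension of ${\bar F}$ is $r=3$. Specialising Proposition \ref{collect}(b) to $r=3$, the four $(\cD,P')$-orders are
$$
0<1<q+1-m_1(P')<q+1,
$$
so the whole computation reduces to determining $m_1(P')$, the smallest positive element of the Weierstrass semigroup $H(P')$.

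By Proposition \ref{0}, $H(P')$ is the numerical semigroup generated by ${\bar q}^{3}-{\bar q}^{2}+{\bar q}$, ${\bar q}^{3}$, and ${\bar q}^{3}+1$. I would argue that the smallest positive element of any numerical semigroup is its least generator: every nonzero element is a sum of generators, hence cannot be smaller than the smallest of them. Since ${\bar q}\ge 2$ forces ${\bar q}^{3}-{\bar q}^{2}+{\bar q}<{\bar q}^{3}<{\bar q}^{3}+1$, the least generator is ${\bar q}^{3}-{\bar q}^{2}+{\bar q}$, and therefore $m_1(P')={\bar q}^{3}-{\bar q}^{2}+{\bar q}$.

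Substituting this into the displayed sequence and using $q={\bar q}^{3}$ gives
$$
j_2(P')=q+1-m_1(P')={\bar q}^{3}+1-({\bar q}^{3}-{\bar q}^{2}+{\bar q})={\bar q}^{2}-{\bar q}+1,
$$
while $j_0(P')=0$, $j_1(P')=1$, and $j_3(P')=q+1={\bar q}^{3}+1$, which is exactly the asserted order sequence. There is no genuine obstacle here, since the argument is essentially a one-line substitution once Proposition \ref{0} is available; the only point deserving a moment's care is confirming that the least generator is ${\bar q}^{3}-{\bar q}^{2}+{\bar q}$ rather than ${\bar q}^{3}$. As an internal consistency check one may additionally verify that the next two semigroup elements are $m_2(P')={\bar q}^{3}=q$ and $m_3(P')={\bar q}^{3}+1=q+1$, in agreement with Proposition \ref{collect}(a).
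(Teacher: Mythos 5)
Your proof is correct and is exactly the argument the paper intends: it derives the corollary by specialising Proposition \ref{collect}(b) to $r=3$ and reading off $m_1(P')$ as the least generator ${\bar q}^{3}-{\bar q}^{2}+{\bar q}$ of the semigroup from Proposition \ref{0}. The paper states this step without detail, and your write-up simply makes that one-line deduction explicit.
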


Assume now that $e(P'\mid P)=1$.  As this occurs for an infinite number of places $P$, it is possible to choose $P$ in such a way that there exists $ay+by+c\in F$ such that $v_P(ay+by+c)={\bar q}$
(see e.g. \cite[p. 302]{HKT}). Then $v_{P'}(ax+by+c)=v_{P}(ax+by+c)=\bar{q}$ holds.
Then by (b) of Proposition \ref{collect}, $\bar{q}^{3}-\bar{q}+1\in H(P')$. Taking into account that the automorphism group of ${\bar F}$ acts transitively on the set of places of degree $1$ with  $e(P'\mid P)=1$ \cite[Theorem 7]{GK}, the following result is obtained.

\begin{proposition}
If  $P'$ is such that $e(P'\mid P)=1$, then
$$
(j_0(P'),j_1(P'),j_2(P'),j_3(P'))=(0,1,{\bar q},{\bar q}^{3}+1)
$$
\end{proposition}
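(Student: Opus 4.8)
The plan is to reduce the entire statement to the single quantity $m_1(P')$, the least positive non-gap at $P'$. Since the Frobenius dimension is $r=3$, part (b) of Proposition \ref{collect} gives the $(\cD,P')$-order sequence in closed form as
$$
(j_0(P'),j_1(P'),j_2(P'),j_3(P'))=(0,1,\,q+1-m_1(P'),\,q+1),
$$
so the whole proposition is equivalent to the assertion $m_1(P')=\bar q^{3}-\bar q+1$, i.e. $j_2(P')=\bar q$, for every degree-one place $P'$ with $e(P'\mid P)=1$. I will first establish this for one well-chosen place and then propagate it by symmetry.

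The first and most delicate step is to produce a place for which $\bar q$ is visibly a $(\cD,P')$-order. Here I would exploit the subfield $F=K(x,y)$, which is the Hermitian function field: its degree-$(\bar q+1)$ plane model via $\langle 1,x,y\rangle$ has generic order sequence $(0,1,\bar q)$, so for a place $P$ of $F$ in general position the tangent line $f=ax+by+c$ attains the maximal contact $v_P(f)=\bar q$ (see \cite[p.~302]{HKT}). Being a non-Weierstrass place for this model and being unramified in $\bar F/F$ are both open conditions on $P$, so I can pick $P$ satisfying both; for the place $P'$ lying over it, $e(P'\mid P)=1$ together with $f\in F$ forces $v_{P'}(f)=v_P(f)=\bar q$.

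Next I would convert this into a statement about $\cD$. Since $f=c+ax+by\in\langle 1,x,y,z\rangle=\cL((q+1)P_0)$, the effective divisor $\div(f)+(q+1)P_0$ belongs to $\cD$; moreover $P'\neq P_0$, because $P_0$ is a pole of $z$ and hence has ramification index $\bar q^{2}-\bar q+1\neq1$. Consequently the valuation of this divisor at $P'$ is exactly $v_{P'}(f)=\bar q$, so $\bar q$ is one of $j_0(P'),\dots,j_3(P')$. Comparing with the list $(0,1,q+1-m_1(P'),q+1)$ and using $1<\bar q<q+1$, the only possibility is $q+1-m_1(P')=\bar q$, whence $m_1(P')=\bar q^{3}-\bar q+1$ and the order sequence of this particular $P'$ is $(0,1,\bar q,\bar q^{3}+1)$.

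Finally I would globalise. By \cite[Theorem~7]{GK} the automorphism group $\Aut(\bar F)$ acts transitively on the degree-one places with $e(P'\mid P)=1$, and the Frobenius linear series $\cD$, being intrinsically attached to $\bar F$, is preserved by every automorphism; hence the $(\cD,P')$-order sequence is constant on this orbit, which is the whole set in question. The only real obstacle is the second step: one must be sure that the place realising the Hermitian contact $\bar q$ can simultaneously be taken unramified in $\bar F/F$. Since each requirement excludes only finitely many places, their common locus is non-empty, and the transitivity of $\Aut(\bar F)$ then removes any dependence on the particular choice.
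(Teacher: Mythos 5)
Your proof is correct and follows essentially the same route as the paper: choose a place $P$ of the Hermitian subfield $F$ that is unramified in $\bar F/F$ and admits a line $ax+by+c$ with $v_P(ax+by+c)=\bar q$, lift this contact to $P'$ to conclude via Proposition \ref{collect}(b) that $j_2(P')=\bar q$, and then spread the conclusion to all unramified degree-one places by the transitivity of the automorphism group from \cite[Theorem 7]{GK}. The only additions are bookkeeping details (the explicit reduction to $m_1(P')$ and the remark that $P'\neq P_0$) that the paper leaves implicit.
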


\begin{theorem} If $q$ is a cube, then there exists a  maximal function field with Frobenius dimension $3$ and with $\cD$-order sequence $(0,1,\sqrt[3]q,q)$.
\end{theorem}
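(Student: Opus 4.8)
The plan is to exhibit the GK function field $\bar{F}$ of this section as the required example. It is already known to be $\fqq$-maximal, and the identity $\cL((q+1)P_0)=\langle 1,x,y,z\rangle$ shows that its Frobenius dimension is $3$, so it is a legitimate candidate. By part (c) of Proposition \ref{collect} we already have $\epsilon_0=0$, $\epsilon_1=1$ and $\epsilon_3=q$; hence the entire statement collapses to the single claim $\epsilon_2=\sqrt[3]{q}=\bar{q}$, and the whole proof is organised around computing this one invariant.

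The key idea is to pin down $\epsilon_2$ by playing the two computations of $(\cD,P')$-orders at places of degree one against the general minimality property $\epsilon_i\le j_i(P')$ of the order sequence. First I would record what the two preceding results give: every place $P'$ of degree one of $\bar{F}$ satisfies
$$
j_2(P')\in\{\bar{q},\ \bar{q}^{2}-\bar{q}+1\},
$$
the value $\bar{q}$ occurring exactly when $e(P'\mid P)=1$ and the value $\bar{q}^{2}-\bar{q}+1$ when $e(P'\mid P)=\bar{q}^{2}-\bar{q}+1$. Since only the places lying over the zeros and poles of $z$ are of the latter type, there are finitely many of them, so places with $e(P'\mid P)=1$ certainly exist.

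Next I would combine the two halves of the argument. On one hand, applying the general inequality $\epsilon_2\le j_2(P')$ to any place with $e(P'\mid P)=1$ yields $\epsilon_2\le\bar{q}$. On the other hand, Lemma \ref{fern} produces a place $P$ of degree one with $j_2(P)=\epsilon_2$; for that particular place the classification above forces $\epsilon_2\in\{\bar{q},\ \bar{q}^{2}-\bar{q}+1\}$. Since $\bar{q}^{2}-\bar{q}+1=(\bar{q}-1)^2+\bar{q}>\bar{q}$ for every $\bar{q}\ge 2$, the bound $\epsilon_2\le\bar{q}$ excludes the larger value and leaves $\epsilon_2=\bar{q}$, giving the $\cD$-order sequence $(0,1,\bar{q},q)$.

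I expect the decisive point to be conceptual rather than computational. All the substantive work is already done upstream, in particular the determination in Proposition \ref{0} of the Weierstrass semigroup at the totally ramified places and the exhibition of a degree-one place carrying a function of valuation $\bar{q}$, so the proof is essentially a synthesis. The one role that must be emphasised is that of Lemma \ref{fern}: the generic order $\epsilon_2$ is attained at a possibly non-rational point, and the degree-one data alone would only deliver the inequality $\epsilon_2\le\bar{q}$, never the equality. It is precisely Lemma \ref{fern} that anchors $\epsilon_2$ at a place of degree one, where the admissible values are restricted to $\{\bar{q},\bar{q}^{2}-\bar{q}+1\}$, and so forbids $\epsilon_2$ from dropping to a smaller value. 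As a consistency check one notes that the sequence $(0,1,\bar{q},q)$ is admissible under the $p$-adic criterion, since $\bar{q}$ and $q$ are powers of $p$ and hence at least $p$, so no intermediate orders between $1$ and $\bar{q}$ are forced.
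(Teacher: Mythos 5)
Your argument is correct and coincides with the paper's own proof: both use Lemma \ref{fern} to locate a degree-one place where $j_2(P)=\epsilon_2$, forcing $\epsilon_2\in\{\bar{q},\,\bar{q}^2-\bar{q}+1\}$, and then use the inequality $\epsilon_2\le j_2(P')$ at an unramified degree-one place to exclude the larger value. The only difference is expository; you spell out the role of each ingredient more explicitly than the paper does.
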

\begin{proof} We prove that the $\cD$-order sequence of the GK function field is $(0,1,{\bar q},{\bar q}^3)$. By Lemma \ref{fern}, there exists an ${\mathbb F}_{q^2}$-rational place $P$ of ${\bar F}$ such that $j_2(P)=\epsilon_2$. Since the only possibilities for $j_2(P)$ are ${\bar q}$ and ${\bar q}^2-{\bar q}+1$, and since $j_2(P)\ge \epsilon_2$ for every $P\in {\mathbb P}({\bar F})$, the claim follows.
\end{proof}
Therefore, the answer to question (A) in Introduction is obtained.
\begin{theorem}\label{3} There exists a maximal curves over ${\mathbb F}_{27^2}$ with Frobenius dimension $3$ and with $\cD$-order sequence $(0,1,3,27)$.
\end{theorem}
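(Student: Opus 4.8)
The plan is to derive this statement as the special case $\bar q = 3$ of the theorem immediately preceding it. First I would observe that $\bar q = 3$ is a prime power, so the construction of the GK function field applies with $q = \bar q^3 = 27$; the ground field is then $\mathbb F_{q^2} = \mathbb F_{27^2}$, exactly as in the statement. Identifying the degree-one places of the GK function field $\bar F$ with its $\mathbb F_{27^2}$-rational points, we regard $\bar F$ as a maximal curve over $\mathbb F_{27^2}$.

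Next I would invoke the preceding theorem, whose proof establishes that the $\cD$-order sequence of the GK function field is $(0,1,\bar q,\bar q^3)$, equivalently $(0,1,\sqrt[3]q,q)$. Substituting $\bar q = 3$ yields the sequence $(0,1,3,27)$, which is precisely the claimed one; in particular $\epsilon_3 = q = 27$ and the Frobenius dimension equals $3$. I would also record that here $\epsilon_2 = \bar q = 3$, consistent with the fact recalled in the Introduction that $\epsilon_2 = 3$ can occur only when $p = 3$, which indeed holds since $27 = 3^3$.

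Because the argument is a direct specialization, I do not expect a genuine obstacle. The only point that needs explicit verification is that the hypothesis of the preceding theorem is met, namely that $27$ is the cube of a prime power; this is immediate from $27 = 3^3$. Thus the GK function field over $\mathbb F_{27^2}$ furnishes the desired example and answers question (A) affirmatively.
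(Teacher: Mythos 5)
Your proposal is correct and matches the paper's (implicit) argument exactly: Theorem \ref{3} is obtained by specializing the preceding theorem on the GK function field to $\bar q=3$, $q=27$, giving the order sequence $(0,1,3,27)$ over ${\mathbb F}_{27^2}$. Your additional checks ($27=3^3$ is a cube of a prime power, and the consistency with $\epsilon_2=3\Rightarrow p=3$) are sound.
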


\section{On a maximal function field over ${\mathbb F}_{49}$}
In \cite[Example 6.3]{GSX} it is shown that for every divisor $m$ of $q^2-1$ the function field $F=K(x,y)$ with 
$$
y^{\frac{q^2-1}{m}}=x(x+1)^{q-1}
$$
is a maximal function field with  genus $g=\frac{1}{2m}(q+1-d)(q-1)$, where $d={\rm gcd}(m,q+1)$. In this section we focus on the case $q=7$ and  $m=3$, whence $d=1$ and $g=7$. We are going to prove the following result, which provides an affirmative answer to both questions (B) and (C) in Introduction.
\begin{theorem}\label{7} Let $F=K(z,t)$ be the function field  defined over $K={\mathbb F}_{49}$ by the equation $z^{16}=t(t+1)^6$. Then the Frobenius dimension of $F$ is $3$, the $\cD$-order sequence of $F$ is $(0,1,2,7)$, and there exists an ${\mathbb F}_{49}$-rational place $P$ of $F$ such that $j_2(P)=3=8-\lfloor\frac{2}{3}(8)\rfloor$.
\end{theorem}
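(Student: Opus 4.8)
The plan is to work with the specific function field $F=K(z,t)$ over $K=\mathbb F_{49}$ defined by $z^{16}=t(t+1)^6$, which is the case $q=7$, $m=3$ of the family from \cite{GSX}. First I would record the basic invariants: here $q=7$, so $q^2-1=48$ and $\frac{q^2-1}{m}=16$, giving $d=\gcd(3,8)=1$ and genus $g=\frac{1}{2\cdot 3}(8-1)(6)=7$. Since $F$ is maximal of genus $7$ over $\mathbb F_{49}$, the Hasse--Weil bound gives $N(F)=49+1+2\cdot 7\cdot 7=148$ rational places. To analyse the Frobenius linear series $\cD=|(q+1)P_0|=|8P_0|$, I must locate a suitable degree-one place $P_0$ and compute a basis of $\cL(8P_0)$. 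The natural candidate is a totally ramified place of the Kummer extension $F/K(t)$, for instance the place lying over $t=\infty$ or over one of the branch points $t=0,t=-1$; I would examine the pole/zero structure of $z$ and $t$ at such a place to exhibit explicit functions whose pole divisors are supported at $P_0$ with the right orders.

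The key computation is to determine the Weierstrass semigroup $H(P_0)$ at a well-chosen rational place $P_0$, or at least enough of its small elements. Since the Frobenius dimension is $r=\dim\cL(8P_0)-1$, and we expect $r=3$, I would show that $\cL(8P_0)$ has dimension exactly $4$ by producing four linearly independent functions with poles only at $P_0$ of pole orders bounded by $8$, and then invoking Proposition \ref{collect}(a), which forces $m_{r-1}(P_0)=q=7$ and $m_r(P_0)=q+1=8$. Concretely, I would compute the valuations $v_{P_0}(z)$ and $v_{P_0}(t)$ (or of $t$, $t+1$, and $z$) at the chosen place, form monomials $z^a t^b$, and read off which have pole order at most $8$; the gap/nongap structure below $8$ then determines the candidate semigroup generators. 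Once $7,8\in H(P_0)$ and the full list of nongaps up to $8$ is known, Proposition \ref{collect}(b) converts these into the $(\cD,P_0)$-orders via $j_i=q+1-m_{r-i}(P_0)$, and I can read off both the generic order sequence and the special order $j_2(P_0)$.

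To pin down the $\cD$-order sequence $(\epsilon_0,\ldots,\epsilon_3)=(0,1,2,7)$ itself (as opposed to the orders at a single point), I would combine Proposition \ref{collect}(c), which gives $\epsilon_0=0$, $\epsilon_1=1$, $\epsilon_3=q=7$ for free, with the Cossidente--Korchm\'aros--Torres dichotomy that $\epsilon_2\in\{2,3\}$ with $\epsilon_2=3$ only if $p=3$. Since here $p=7$, we immediately get $\epsilon_2=2$. By Lemma \ref{fern} there is a place with $j_2=\epsilon_2=2$, confirming the generic behaviour, and it remains to exhibit one rational place $P$ where the order jumps to $j_2(P)=3$. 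For this I would identify a place $P$ whose semigroup $H(P)$ fails to contain $q-1=6$ but whose structure forces $m_1(P)=q+1-3=5$ via Proposition \ref{collect}(b), i.e. a place where $5\notin H(P)$ in the appropriate sense so that $j_2(P)=q+1-m_1(P)=8-5=3$; the arithmetic identity $3=8-\lfloor\frac 23\cdot 8\rfloor=8-5$ is then exactly the claimed value, simultaneously answering (B) and (C) since $3$ lies in both target sets.

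The main obstacle I anticipate is the explicit local analysis at the special place $P$: determining precisely which rational place of $F$ has the anomalous semigroup element $m_1(P)=5$ (equivalently the Weierstrass point behaviour giving $j_2(P)=3$) requires computing valuations of $z$ and $t$ at the ramified places of the Kummer cover and checking linear independence of the relevant functions in $\cL(8P)$, rather than merely counting dimensions. Because $\gcd(16,6)=2$, the ramification of $F/K(t)$ over $t=-1$ is only partial, and the place structure over the branch locus $\{0,-1,\infty\}$ must be worked out carefully to see which rational place carries the extra order; distinguishing the generic places (with $j_2=2$) from this exceptional one is the crux, and I would verify the exceptional behaviour by an explicit Riemann--Roch or Hasse-derivative (Wronskian) computation at $P$.
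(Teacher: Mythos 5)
Your outline follows the same route as the paper: both arguments hinge on computing the Weierstrass semigroup at a distinguished totally ramified rational place, converting non-gaps into $(\cD,P)$-orders via Proposition \ref{collect}(b), and forcing $\epsilon_2=2$ from $p=7\neq 3$ (the paper uses the $p$-adic criterion directly, you invoke the Cossidente--Korchm\'aros--Torres dichotomy; these are interchangeable here). The only methodological difference is that the paper computes the relevant divisors by descending from the Hermitian function field $H=K(x,y)$, $y^8=x^7+x$, through the degree-$3$ Galois cover $H/F$ with $z=y^3$, $t=x^6$, whereas you propose to work directly with the degree-$16$ Kummer extension $F/K(t)$; your route is viable and produces the same divisors, namely $(z)=3(\bar P_1+\bar P_2)+\bar P_0-7\bar P_\infty$ and $(t+1)=8(\bar P_1+\bar P_2)-16\bar P_\infty$, with $\bar P_\infty$ the totally ramified place over $t=\infty$.

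However, the proposal stops exactly where the proof begins. First, the decisive computation is never performed: the special place is $\bar P_\infty$, the common pole of $z$ and $t$, and the witness is $\gamma=z^3(t+1)^{-1}$, whose pole divisor is $5\bar P_\infty$ (Proposition \ref{totti} with $i=3$, $j=1$); this is what puts $5$ into $H(\bar P_\infty)$ and hence gives $j_2(\bar P_\infty)=8-5=3$. (Your parenthetical ``a place where $5\notin H(P)$'' has the condition backwards: you need $5=m_1(P)\in H(P)$, with $2,3,4,6$ gaps.) Second, exhibiting four functions in $\cL(8\bar P_\infty)$ only gives $r\ge 3$; to get $r=3$ exactly, and to conclude $m_1(\bar P_\infty)=5$ rather than something smaller, you must rule out further small non-gaps. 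The paper does this by counting: $0,5,7,8,10,12,13$ are seven non-gaps below $2g=14$, and since $g=7$ these are all of them (Corollary \ref{SMG}); alternatively $r\le 3$ follows from the genus bound of Proposition \ref{castel}. Without one of these closing arguments the claims $r=3$ and $j_2(\bar P_\infty)=3$ are not actually established.
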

The function field $F$ is a subfield of the Hermitian function field $H=K(x,y)$ with $y^{8}=x^7+x$. More precisely,  $F\cong K(x^6,y^3)$, and $H/F$ is Galois of degree $3$ (cf. \cite[Example 6.3]{GSX}). The Galois group of $H/F$ is $\Gamma=\{1,\tau,\tau^2\}$, where 
$\tau(x)=a^8x$, $\tau(y)=ay$, with $a$ a primitive cubic root of unity.

Let $P_0$ (resp. $P_\infty$) be the only zero (resp. pole) of $x$ in $H$. Let $P_1,\ldots,P_{6}$ be the zeros of $y$ in $H$ distinct from $P_0$.
\begin{lemma}  The only ramification points of  $H/F$ are $P_0$ and $P_\infty$. 
\end{lemma}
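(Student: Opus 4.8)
The plan is to avoid a place-by-place analysis of the action of $\tau$ and instead exploit that $H/F$ is a cyclic Kummer extension of degree $3$, reading off the ramification from the divisor of a single function. Since $\tau(y)=ay\neq y$, the element $y$ is not fixed by $\Gamma$, so $y\notin F$; as $[H:F]=3$ is prime and $y^3\in F$, the minimal polynomial of $y$ over $F$ is $T^3-y^3$, whence $H=F(y)$ with $y^3\in F$. Because $3\mid q^2-1=48$ the field $K$ contains the primitive cube roots of unity, and $3\neq p=7$, so $H/F$ is a tame Kummer extension of degree $3$. By the theory of such extensions (see e.g. \cite[Prop.~III.7.3]{STI}), a place $P$ of $F$ ramifies in $H$ if and only if $3\nmid v_P(y^3)$, and in that case the ramification is total. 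Everything thus reduces to computing the divisor of $y^3$ in $F$.

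Next I would make the model of $F$ explicit. Under the isomorphism $F\cong K(x^6,y^3)$ the generators correspond to $t=x^6$ and $z=y^3$; indeed $z^{16}=y^{48}=(y^8)^6=(x^7+x)^6=x^6(x^6+1)^6=t(t+1)^6$, which is precisely the defining equation of $F$. From the Kummer equation $z^{16}=t(t+1)^6$ over $K(t)$ I would determine the places of $F$ lying over $t=0,-1,\infty$ together with the values $v_P(z)=v_P(y^3)$ there: over $t=0$ the extension $F/K(t)$ is totally ramified and $v_P(z)=1$; over $t=\infty$ it is again totally ramified and $v_P(z)=-7$; over $t=-1$ there are $\gcd(16,6)=2$ places, at each of which $v_P(z)=3$. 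At every other place $z$ is a unit, so $v_P(z)=0$.

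Applying the Kummer criterion then completes the identification. Since $3\nmid 1$ and $3\nmid 7$, the places of $F$ over $t=0$ and over $t=\infty$ ramify in $H$, while $3\mid 3$ and $3\mid 0$ show that all remaining places are unramified. The (unique) place of $H$ over $t=0$ is the common zero of $x$ and $y$, namely $P_0$, and the one over $t=\infty$ is the common pole $P_\infty$; the six places over $t=-1$ are exactly $P_1,\dots,P_6$ (where $x^6=-1$), which therefore do not ramify. Hence $P_0$ and $P_\infty$ are the only ramification points of $H/F$.

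As a consistency check I would verify the count with Riemann--Hurwitz. With $g(H)=\frac{7\cdot 6}{2}=21$, $g(F)=7$, and both ramified places totally ramified with $e=3$ (tame), the different has degree $(3-1)+(3-1)=4$, and indeed $2g(H)-2=40=3(2g(F)-2)+4=36+4$. The one delicate point is the divisor computation of the second paragraph: one must track the ramification of $F/K(t)$ at $t=0,-1,\infty$ correctly, since it is precisely the values $v_P(y^3)$, equal to $1$, $3$ and $-7$ at the places over $t=0$, $t=-1$ and $t=\infty$, that decide ramification modulo $3$; in particular the value $3$ over $t=-1$ is what prevents $P_1,\dots,P_6$ from ramifying.
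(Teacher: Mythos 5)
Your proof is correct, but it takes a genuinely different route from the paper's. The paper argues from above: $H/F$ is Galois with group $\Gamma=\{1,\tau,\tau^2\}$, so the ramification points are exactly the points of $H$ with nontrivial stabilizer in $\Gamma$, and a one-line inspection of the fixed points of $\tau(x)=a^8x$, $\tau(y)=ay$ (an affine fixed point would force $ay_0=y_0$ and $a^8x_0=x_0$, hence $x_0=y_0=0$) leaves only $P_0$ and $P_\infty$. You instead argue from below, presenting $H=F(y)$ as a degree-$3$ Kummer extension with $y^3=z\in F$ and reading ramification off the divisor of $z$ via $e(P'\mid P)=3/\gcd(3,v_P(z))$; your valuations $v(z)=1,-7,3$ at the places over $t=0,\infty,-1$ agree exactly with the divisor $(z)=3({\bar P}_1+{\bar P}_2)+{\bar P}_0-7{\bar P}_\infty$ that the paper computes immediately \emph{after} the lemma, so your route effectively merges the lemma with those subsequent computations, and the Riemann--Hurwitz check is a nice confirmation. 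Two small caveats: the assertion that there are exactly $\gcd(16,6)=2$ places of $F$ over $t=-1$ does not follow from the gcd alone (residue degrees can intervene in a Kummer splitting), though the count is correct here and is anyway irrelevant to your argument, since only the value $v_P(z)=3$ at those places matters; and the ``if and only if'' form of the Kummer criterion should be justified by noting that $z$ is not a cube in $F$ (which you do implicitly via $y\notin F$). The paper's argument is shorter; yours buys the divisor of $z$, which is needed later in any case.
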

\begin{proof}
It is easy to see that for each point $P$ of $H$ distinct from $P_0$ and $P_\infty$ the stabilizer of $P$ in $\Gamma$ is trivial. On the other hand, both $P_0$ and $P_\infty$ are fixed by $\Gamma$.
\end{proof}

Let ${\bar P}_0$ and ${\bar P}_\infty$ be the places of $F$ lying under $P_0$ and $P_\infty$, respectively. Let ${\bar P}_1$ and ${\bar P}_2$ be the two places of $F$ lying under the places $P_i$ of $H$, $i=1,\ldots,6$. Also, let $z=y^3$ and $t=x^6$ in $F$. Then
\begin{eqnarray*}
v_{{\bar P}_0}(z)=\frac{1}{3} v_{P_0}({ y}^3)=1,\quad
v_{{\bar P}_0}(t+1)=\frac{1}{3} v_{P_0}({ x}^{6}+1)=0;
\\v_{{\bar P}_\infty}(z)=\frac{1}{3} v_{P_\infty}({ y}^3)=-7,\quad
v_{{\bar P}_\infty}(t+1)=\frac{1}{3} v_{P_\infty}({ x}^6+1)=\frac{6}{3}ord_{P_\infty}({x}) =-16;\\
{\text{ for }}\,\, i=1,2,\,\,\,
v_{{\bar P}_i}(z)=v_{P_i}({ y}^3)=3,\quad
v_{{\bar P}_i}(t+1)= v_{P_i}({ x}^6+1)=7.
\end{eqnarray*}
To sum up,
$$
(z)=3({\bar P}_1+{\bar P}_2)+{\bar P_0}-7{\bar P}_{\infty}, \qquad (t+1)=8({\bar P}_1+{\bar P}_2)-16{\bar P}_{\infty} .
$$

\begin{proposition}\label{totti} Let $i,j$ be non-negative integers such that
$3 i\ge 8j$.  
Then $7i-16j\in H({\bar P}_\infty)$.
\end{proposition}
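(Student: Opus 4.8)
The plan is to produce an explicit function of $F$ whose pole divisor is exactly $(7i-16j){\bar P}_\infty$; since $H({\bar P}_\infty)$ is by definition the set of non-negative integers $m$ admitting a function with pole divisor $m{\bar P}_\infty$, this will establish the claim. The candidate is dictated immediately by the two divisor computations recorded just before the statement, namely
$$
w=\frac{z^{i}}{(t+1)^{j}}.
$$

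First I would compute the principal divisor of $w$ as the corresponding integer combination of $(z)=3({\bar P}_1+{\bar P}_2)+{\bar P}_0-7{\bar P}_\infty$ and $(t+1)=8({\bar P}_1+{\bar P}_2)-16{\bar P}_\infty$, obtaining
$$
(w)=(3i-8j)({\bar P}_1+{\bar P}_2)+i\,{\bar P}_0-(7i-16j){\bar P}_\infty.
$$
The essential point here is that no place other than ${\bar P}_0,{\bar P}_1,{\bar P}_2,{\bar P}_\infty$ can appear, because the supports of $(z)$ and $(t+1)$ are confined to these four places.

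Next I would invoke the hypothesis $3i\ge 8j$. It guarantees that the coefficient $3i-8j$ of ${\bar P}_1+{\bar P}_2$ is non-negative, while the coefficient $i$ of ${\bar P}_0$ is trivially non-negative; hence ${\bar P}_\infty$ is the only pole of $w$. To see that the resulting pole order is a legitimate (non-negative) semigroup element, I would note that $3i\ge 8j$ forces $7i\ge\tfrac{56}{3}j\ge 16j$ (using $j\ge 0$), so $7i-16j\ge 0$. Consequently the pole divisor of $w$ is precisely $(7i-16j){\bar P}_\infty$, and therefore $7i-16j\in H({\bar P}_\infty)$.

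I do not anticipate a genuine obstacle: the argument is essentially linear bookkeeping on a divisor that has already been written out, supplemented by the elementary inequality $7i\ge 16j$. The only step demanding a moment of care is verifying that $w$ has no unlisted poles, but this is immediate once one observes that the displayed divisors of $z$ and $t+1$ each have degree zero and are supported only on the four named places.
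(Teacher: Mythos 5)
Your proof is correct and follows essentially the same route as the paper's: both take $\gamma=z^i(t+1)^{-j}$, expand its principal divisor from the displayed divisors of $z$ and $t+1$, and use $3i\ge 8j$ to conclude that the pole divisor is exactly $(7i-16j){\bar P}_\infty$. Your version is slightly more careful (you write the coefficient $i$ of ${\bar P}_0$ correctly, where the paper has a small typo, and you check $7i-16j\ge 0$ explicitly), but there is no substantive difference.
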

\begin{proof}
Let $\gamma=z^i(t+1)^{-j}$. Then
$$
(\gamma)=3i({\bar P}_1+{\bar P}_2)+{\bar P_0}-7i{\bar P}_{\infty}-8j({\bar P}_1+{\bar P}_2)+16j{\bar P}_{\infty},
$$
whence
$$
(\gamma)_\infty=(7i-16j){\bar P}_{\infty}.
$$
\end{proof}

\begin{corollary}\label{SMG} The only non-gaps at ${\bar P_\infty}$ that are less than or equal to $8$ are $0,5,7,8$.
\end{corollary}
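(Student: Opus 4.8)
My plan is to establish the two directions separately: first that $0,5,7,8$ are non-gaps at ${\bar P}_\infty$, and then that $1,2,3,4,6$ are gaps.

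The non-gap direction is immediate from Proposition \ref{totti}: I would exhibit, for each target value, a pair $(i,j)$ of non-negative integers with $3i\ge 8j$ and $7i-16j$ equal to that value. Concretely, $(i,j)=(0,0),(1,0),(3,1),(8,3)$ give $7i-16j=0,7,5,8$, and the hypothesis $3i\ge 8j$ holds in each case (the tightest being $24\ge 24$ for $(8,3)$). Hence $0,5,7,8\in H({\bar P}_\infty)$.

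For the converse I would pass to the Hermitian field through the extension $H/F$. Because the only ramification of $H/F$ occurs at $P_0$ and $P_\infty$, and both are fixed by all of $\Gamma$, the place $P_\infty$ is totally ramified; thus it is the unique place of $H$ lying over ${\bar P}_\infty$ and $e(P_\infty\mid{\bar P}_\infty)=3$. Consequently, if $n\in H({\bar P}_\infty)$, a function $f\in F$ with $(f)_\infty=n{\bar P}_\infty$ has, viewed in $H$, its sole pole at $P_\infty$, of order $3n$; therefore $3n\in H(P_\infty)$. Since the Weierstrass semigroup of the Hermitian field at $P_\infty$ is $\langle 7,8\rangle$, generated by the pole orders $-v_{P_\infty}(y)=7$ and $-v_{P_\infty}(x)=8$, and since $3n\in\{3,6,9,12,18\}$ for $n\in\{1,2,3,4,6\}$, none of which lies in $\langle 7,8\rangle$, each such $n$ must be a gap. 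Combining the two directions gives exactly $\{0,5,7,8\}$.

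The only delicate point, and the step I would treat most carefully, is the transfer between $F$ and $H$: one needs the total ramification of $P_\infty$ (to guarantee a single pole of order $3n$ upstairs) together with the identification $H(P_\infty)=\langle 7,8\rangle$. Once these are in place, the remainder is a finite numerical check and no deeper obstruction arises.
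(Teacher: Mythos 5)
Your proof is correct, but the exclusion step takes a genuinely different route from the paper's. For the inclusion $\{0,5,7,8\}\subseteq H({\bar P}_\infty)$ you argue essentially as the paper does (the paper obtains $7$ and $8$ from maximality via Proposition \ref{collect} rather than from the pairs $(1,0)$ and $(8,3)$ in Proposition \ref{totti}, but that is immaterial, and your check $3\cdot 8\ge 8\cdot 3$ is legitimate since the hypothesis is a non-strict inequality). For the converse the paper stays inside $F$: from $5,7,8$ it produces the further non-gaps $10,12,13$, obtaining seven non-gaps in $[0,2g-1]=[0,13]$, and then invokes the Weierstrass gap theorem --- exactly $g=7$ of the integers $0,\ldots,2g-1$ are non-gaps --- to conclude no others exist. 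You instead go up to the Hermitian field: since $P_\infty$ is totally ramified in $H/F$ (which does follow from the paper's ramification lemma, $[H:F]=3$ being prime), a pole divisor $n{\bar P}_\infty$ has conorm $3nP_\infty$, so $n\in H({\bar P}_\infty)$ forces $3n\in H(P_\infty)=\langle 7,8\rangle$, and $3,6,9,12,18$ all fail to lie in that semigroup. Both arguments are complete. The paper's is self-contained in $F$ and needs only the value of the genus; yours trades that for the standard identification of the Hermitian semigroup at $P_\infty$ together with the ramification data, but in exchange it exhibits directly why each of $1,2,3,4,6$ is a gap and does not depend on the exact count of non-gaps below $2g$.
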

\begin{proof} The integers $7$ and $8$ are non-gaps since $F$ is an ${\mathbb F}_{49}$-maximal function field (see Proposition \ref{collect}). Proposition \ref{totti} for $i=3$ and $j=1$ implies that $5$ is a non-gap at ${\bar P}_\infty$. Then it is easy to see that $10,12,13$ are non-gaps as well. Therefore, we have $7$ non-gaps less than $2g=14$. Since $g=7$, this rules out the possibility that there is another positive non-gap less than $7$ and distinct from $5$.
\end{proof}

We are now in a position to prove Theorem \ref{7}.

{\em Proof of Theorem \ref{7}} The Frobenius dimension of $F$ is $3$ by Corollary \ref{SMG}. By the $p$-adic criterion (see Section 2), the $\cD$-order sequence is $(0,1,2,7)$. Again by Corollary \ref{SMG} we have $j_2({\bar P}_\infty)=3$.

\section{An $\fqq$-maximal function field of genus $\frac{q^{2}-q+4}{6}$}\label{finale}

Througouth this section we assume that that $q\equiv 2 \pmod 3$.
We recall some facts about the function field $F$ over
$K$, defined by
$$
F=K(x,y)\qquad \text{with } y^\frac{q+1}{3}+x^\frac{q+1}{3}+1=0.
$$
Clearly $F$ is a subfield of the hermitian function field $H$ over $K$ defined by
$
H=K(z,t) $ with $ z^{q+1}+t^{q+1}+1=0,
$
and therefore $F$ is a maximal function field. Since the equation $Y^\frac{q+1}{3}+X^\frac{q+1}{3}+1=0$ defines a non-singular plane algebraic curve of degree $\frac{q+1}{3}$, the genus $g(F)=\frac{1}{2}\left(\frac{q+1}{3}-1\right)\left(\frac{q+1}{3}-2\right)$, and therefore $N(F)=q^2+1+q\left(\frac{q+1}{3}-1\right)\left(\frac{q+1}{3}-2\right)$. 

It is straightforward to check that the zeros of $x$ are $\frac{q+1}{3}$ distinct places of degree $1$. The same holds for $y$. The pole set of $x$ coincides with the pole set of $y$, and consists of $\frac{q+1}{3}$ places of degree $1$.

For $\alpha, \beta\in {\mathbb F}_{q^2}$ such that
$\alpha^\frac{q+1}{3}+\beta^\frac{q+1}{3}+1=0$, let $P_{\alpha,\beta}\in {\mathbb
P}({F})$  denote the common zero of $x-\alpha$ and
$y-\beta$. Let $P_{\infty,1},\ldots,P_{\infty,\frac{q+1}{3}}$ be the poles of $x$ (and $y$).
Clearly, $P_{\alpha,\beta}$ is a place of degree $1$. 
Also, for any $\beta\in \fqq$ with $\beta^{\frac{q+1}{3}}+1=0$,
the zero divisor $(y-\beta)_0$ of $y-\beta$ is equal to $\frac{q+1}{3}P_{0,\beta}$.

Henceforth, $w$ is an element in $\fqq$ such that $w^\frac{q+1}{3}=3$.
 Let
$$
u=wxy\in F.
$$
For any  place $P$ of $F$ which is a zero of either $x$ or $y$,  
$v_P(u)=1$ holds. Moreover, for any common pole $P$ of $x$ and $y$ we have  $v_{P}(u)=-2$. Any other place of $F$
is neither a pole or a zero of $u$.

Consider the field extension
$F(z)/F $ where $z^{3}=u$. Let
\begin{equation}\label{effe}
{\bar F}=F(z).
\end{equation}
Clearly, $u$ is not a $3$-rd power of an element in $F$. Then ${\bar F}$ is a Kummer
extension of $F$ (see \cite[Proposition III.7.3]{STI}), and in particular ${\bar F}/F$ is Galois of degree $3$.
The ramification index $e(P'\mid P)$ can be easily computed for any
place $P'$ of ${\bar F}$ lying over a place $P$ of $F$: as $gcd(2,3)=1$, (b) of \cite[Proposition III.7.3]{STI}
gives
\begin{equation}\label{ramif}
\left\{ \begin{array}{l}e(P'\mid P)=3, \quad \text{if }P\text{ is either a zero or a pole of } 
xy,\\ e(P'\mid P)=1, \quad
\text{otherwise.}\end{array}\right.
\end{equation}

By \cite[Corollary III.7.4]{STI},
\begin{equation}\label{genere1}
g({\bar F})=1+3(g(F)-1)+3\frac{q+1}{3}=\frac{q^2-q+4}{6}.
\end{equation}

Now we compute the number $N$ of  places of degree $1$ of ${\bar F}$.
Any place in ${\mathbb P}({\bar F})$ of
degree $1$ either lies over some $P_{\infty,i}$, or some $P_{\alpha,\beta}$. 
By (\ref{ramif}), any place lying over either $P_{\infty,i}$ or
$P_{\alpha,\beta}$ with $\alpha\beta=0$ is fully
ramified. This gives $q+1$  places of degree $1$ of ${\bar F}$.

Assume now that $\alpha\beta \neq 0$. Let
$$
\varphi_{\alpha,\beta}(T)=T^3-w\alpha\beta\in
{\mathbb F}_{q^2}[T].
$$
As $gcd(3,p)=1$, $\varphi_{\alpha,\beta}(T)$ has $3$ distinct roots
in the algebraic closure of ${\mathbb F}_{q^2}$. Let $\lambda$ be any
of such roots. Then $\lambda\in {\mathbb F}_{q^2}$ if and only if
\begin{equation}\label{condition}
1=\lambda^{q^2-1}=(\lambda^{q+1})^{q-1}=\left((w\alpha\beta)^\frac{q+1}{3}\right)^{q-1}=\left(3(\alpha\beta)^\frac{q+1}{3}\right)^{q-1},
\end{equation}
that is $3(\alpha\beta)^\frac{q+1}{3}\in \fq$.  
Taking into account the classical relation  $$(A+B+C)\mid A^3+B^3+C^3-3ABC,$$ we have that $\alpha^{\frac{q+1}{3}}+\beta^{\frac{q+1}{3}}+1=0$ yields
$$
3(\alpha\beta)^\frac{q+1}{3}= \alpha^{q+1}+\beta^{q+1}+1.
$$
Then (\ref{condition}) follows since $(\alpha^{q+1}+\beta^{q+1}+1)^q=(\alpha^{q+1}+\beta^{q+1}+1)$.

 By \cite[Proposition III.7.3]{STI},
the minimal polynomial of $z$ over $F$ is $\varphi(T)=T^3-wxy$. As
$wxy \in {\mathcal O}_{P_{\alpha,\beta}}$, Kummer's Theorem
\cite[Theorem III.3.7]{STI} applies, and hence $P_{\alpha,\beta}$
has $3$ distinct extensions $P\in {\mathbb P}({\bar F})$ with $\deg(P)=1$.

Since $F$ is maximal, the number of pairs $(\alpha,\beta)$ with $\alpha\beta\neq 0$ and $\alpha^{\frac{q+1}{3}}+\beta^{\frac{q+1}{3}}+1=0$ is
$$
q^2+1+q\left(\frac{q+1}{3}-1\right)\left(\frac{q+1}{3}-2\right)-(q+1)
$$
Therefore, the total number $N$ of  places of degree $1$ of ${\bar F}$ is
$$N=q+1+3\left( q^2-q+q\left(\frac{q+1}{3}-1\right)\left(\frac{q+1}{3}-2\right)\right)$$
By straightforward computation
$$
N=q^2+1+2q\frac{q^2-q+4}{6},
$$
whence the following result is obtained.
\begin{theorem} ${\bar F}$ is an $\fqq$-maximal function field.
\end{theorem}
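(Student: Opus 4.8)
The plan is to verify directly that $\bar F$ attains the Hasse--Weil bound, which by definition is exactly what it means for $\bar F$ to be $\fqq$-maximal. Recall that an $\fqq$-rational function field $\bar F$ of genus $g(\bar F)$ is maximal precisely when its number $N$ of places of degree $1$ satisfies $N=q^2+1+2g(\bar F)q$. Since both $g(\bar F)$ and $N$ have already been computed in the preceding paragraphs, the argument reduces to a single comparison.

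First I would recall from (\ref{genere1}) that, via the Kummer-extension genus formula applied to $\bar F/F$ together with the ramification data (\ref{ramif}), the genus is
$$
g(\bar F)=\frac{q^2-q+4}{6}.
$$
Next I would substitute this value into the Hasse--Weil upper bound, obtaining
$$
q^2+1+2g(\bar F)q=q^2+1+2q\,\frac{q^2-q+4}{6}.
$$
Finally I would invoke the point count carried out just above, which gives
$$
N=q^2+1+2q\,\frac{q^2-q+4}{6},
$$
so that $N$ equals the Hasse--Weil bound. Hence $\bar F$ is $\fqq$-maximal, as claimed.

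The conclusion is immediate once the two ingredients are in hand, so there is no genuine obstacle remaining at this step. The substantive work has already been completed earlier in the section: the determination of the ramification indices (\ref{ramif}) and the resulting genus (\ref{genere1}), and, crucially, the count of degree-one places, which relied on Kummer's Theorem to split the places over each $P_{\alpha,\beta}$ with $\alpha\beta\neq 0$ into three rational ones, on the full ramification of the remaining places, and on the maximality of $F$ to count the admissible pairs $(\alpha,\beta)$. The present theorem is therefore best presented as the packaging of those computations against the Hasse--Weil bound, with the two closed-form expressions for $N$ and for $q^2+1+2g(\bar F)q$ seen to coincide identically.
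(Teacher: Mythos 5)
Your proposal matches the paper's argument exactly: the paper computes $g(\bar F)=\frac{q^2-q+4}{6}$ via the Kummer genus formula and counts $N=q^2+1+2q\frac{q^2-q+4}{6}$ degree-one places in the paragraphs preceding the theorem, then states the result as an immediate consequence of the coincidence with the Hasse--Weil bound. Your packaging of those two computations is precisely the intended proof.
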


\begin{proposition} The Frobenius dimension of ${\bar F}$ is equal to $3$.
\end{proposition}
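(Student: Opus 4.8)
The plan is to pin down the Frobenius dimension $r$ of ${\bar F}$ by squeezing it between $3$ and $3$: an upper bound $r\le 3$ extracted from the genus via the Castelnuovo-type estimate of Proposition \ref{castel}, and a lower bound $r\ge 3$ coming from the classification of maximal function fields of small Frobenius dimension. The substantive inputs, namely the genus $g({\bar F})=\frac{q^2-q+4}{6}$ and the maximality of ${\bar F}$, have already been established, so the argument is essentially a comparison of numbers.

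For the upper bound I would feed $g({\bar F})$ into Proposition \ref{castel}. Writing $B(r)$ for the right-hand side of that proposition, $B(r)$ is a decreasing function of $r$, so it suffices to compare $g({\bar F})$ with $B(4)=\frac{(2q-3)^2-1}{24}=\frac{(q-1)(q-2)}{6}$. A one-line computation gives $g({\bar F})-B(4)=\frac{(q^2-q+4)-(q^2-3q+2)}{6}=\frac{q+1}{3}>0$, so $g({\bar F})>B(4)\ge B(r)$ for every $r\ge 4$. Since a maximal function field of Frobenius dimension $r$ must satisfy $g\le B(r)$, the field ${\bar F}$ cannot have Frobenius dimension $\ge 4$; that is, $r\le 3$.

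For the lower bound I would first note $r\ge 2$: by part (a) of Proposition \ref{collect} both $q$ and $q+1$ lie in the Weierstrass semigroup of any degree-one place, so $0,q,q+1$ are three distinct non-gaps that are $\le q+1$, whence $\dim\cL((q+1)P_0)\ge 3$. It then remains only to exclude $r=2$. But the Hermitian function field is the unique maximal function field of Frobenius dimension $2$, and its genus is $\frac{q(q-1)}{2}$, which differs from $g({\bar F})=\frac{q^2-q+4}{6}$ by $\frac{(q-2)(q+1)}{3}>0$ for all the values $q\equiv 2\pmod 3$ under consideration (all of which satisfy $q>2$). Hence ${\bar F}$ is not the Hermitian function field, so $r\ne 2$, and combining with $r\ge 2$ and $r\le 3$ forces $r=3$.

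There is no genuine obstacle here; the only points requiring care are bookkeeping. One must confirm that $B(r)$ is indeed monotonically decreasing in $r$, so that the single comparison with $B(4)$ disposes of all $r\ge 4$ at once, and one must make sure the degenerate value $q=2$ (where $\frac{q+1}{3}=1$ and the base curve is rational) is excluded. Neither is delicate: the decisive facts are simply that the genus of ${\bar F}$ is too large to allow Frobenius dimension $4$ and too different from the Hermitian genus to allow Frobenius dimension $2$.
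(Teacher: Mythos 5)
Your proof is correct and follows the paper's route: the paper's entire proof is the one-line citation of Proposition \ref{castel}, which is exactly your upper-bound computation $g({\bar F})>B(4)$. You additionally make explicit the lower bound $r\ge 3$ (ruling out the Hermitian case via the genus and noting $r\ge 2$), which the paper leaves implicit; this is a legitimate and indeed necessary completion of the argument rather than a different approach.
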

\begin{proof} The assertion follows from Proposition \ref{castel}.
\end{proof}

\begin{remark}\label{curva_kt}
In \rm{\cite{KT}} $\fqq$-maximal function fields with Frobenius dimension $3$ and genus $\frac{q^{2}-q+4}{6}$. We are not able to tell whether they are isomorphic to ${\bar F}$ or not.
\end{remark}

Fix $\beta \in K$ with $\beta^{\frac{q+1}{3}}=1$, and let $P=P_{0,\beta}\in {\mathbb P}(F)$. Let ${\bar P}$ be the place of ${\bar F}$ lying over $P$.
The pole divisor of $\frac{x}{y-\beta}$ in $F$ is
$\frac{q-2}{3}P$.
Whence ${\bar P}$ is the only pole of $\frac{x}{y-\beta}$ in ${\bar F}$, and
$$
v_{\bar P}\left(\frac{x}{y-\beta}\right)=-(q-2).
$$
This means that $j_2({\bar P})=3$.

Taking into account that by the $p$-adic criterion the third $\cD$-order must be equal to $2$, the following result is arrived at.

\begin{theorem}\label{nuovo1} Let $q$ be odd, $q\equiv 2 \pmod 3$. Then
${\bar F}$ is an $\fqq$-maximal function field with Frobenius dimension $3$ such that
$$
(\epsilon_0,\epsilon_1,\epsilon_2,\epsilon_3)=(0,1,2,q),
$$
and having an $\mathbb{F}_{q^{2}}$-rational  point $P$ with
$$j_{0}(P_\infty)=0,\,\,\,j_{1}(P_\infty)=1,\,\,\,j_{2}(P_\infty)=3,\,\,\,j_{3}(P_\infty)=q+1.$$
\end{theorem}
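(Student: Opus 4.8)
The plan is to assemble the facts already established and to pin down the two quantities not yet determined, namely the generic order $\epsilon_2$ and the order $j_2$ at the distinguished place. The maximality of $\bar F$ and the equality of its Frobenius dimension with $3$ have been proved just above (maximality from the explicit point count giving $N=q^2+1+2q\,\frac{q^2-q+4}{6}$, and the Frobenius dimension from the genus bound of Proposition \ref{castel}), so nothing further is needed for those two assertions. What remains is to identify the $\cD$-order sequence and the $(\cD,\bar P)$-orders, where $\bar P$ denotes the place of $\bar F$ lying over $P_{0,\beta}$.

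For the order sequence, Proposition \ref{collect}(c) already supplies $\epsilon_0=0$, $\epsilon_1=1$ and $\epsilon_3=q$, so the sole task is to compute $\epsilon_2$. By the theorem of Cossidente--Korchm\'aros--Torres recalled in Section 2 one has $\epsilon_2\in\{2,3\}$, with $\epsilon_2=3$ possible only when $p=3$; since the hypotheses $q$ odd and $q\equiv 2\pmod 3$ force $p\ge 5$, we obtain $\epsilon_2=2$. I would also record the alternative route through the $p$-adic criterion, which is the one suggested by the computation below: once $j_2(\bar P)=3$ is known one has $\epsilon_2\le 3$, and were $\epsilon_2=3$ then, because $3<p$, the $p$-adic criterion would force $0,1,2$ to be $\cD$-orders as well, yielding five distinct orders $0,1,2,3,q$ for a linear series of dimension $3$, which is impossible. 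Either way $\epsilon_2=2$, so the $\cD$-order sequence is $(0,1,2,q)$.

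For the orders at $\bar P$, observe first that $\bar P$ is $\fqq$-rational: it lies over the degree-one place $P_{0,\beta}$, which is a zero of $xy$, so by (\ref{ramif}) the extension $\bar F/F$ is totally ramified there and $\deg\bar P=1$. Consequently $j_0(\bar P)=0$, $j_1(\bar P)=1$ and $j_3(\bar P)=q+1$ by the standard description of the orders at a rational place. The heart of the matter is $j_2(\bar P)=3$, and this is where the only genuine care is required. The function $\frac{x}{y-\beta}$ has $\bar P$ as its unique pole, of order $q-2$, so $q-2\in H(\bar P)$; the decisive point is that this non-gap must be the \emph{smallest} positive one. Indeed, Proposition \ref{collect}(a) gives $m_2(\bar P)=q$, forcing a rigid shape at the low end of the semigroup, so that $m_1(\bar P)$ is the only positive non-gap strictly below $q$. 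Since $q-2$ is a positive non-gap with $q-2<q$, it must equal $m_1(\bar P)$, and then Proposition \ref{collect}(b) gives $j_2(\bar P)=q+1-m_1(\bar P)=3$. The obstacle, such as it is, is thus not a hard estimate but the recognition that (a) pins $q-2$ down as $m_1(\bar P)$ rather than merely as some non-gap; granting this, the four claimed orders $(0,1,3,q+1)$ follow at once and the proof is complete.
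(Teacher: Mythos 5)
Your proof is correct and follows essentially the same route as the paper: the same point count and genus bound for maximality and Frobenius dimension, the same key function $\frac{x}{y-\beta}$ with unique pole of order $q-2$ at $\bar P$, the same use of Proposition \ref{collect}(a)--(b) to conclude $m_1(\bar P)=q-2$ and hence $j_2(\bar P)=3$, and the same $p$-adic criterion argument for $\epsilon_2=2$. You merely make explicit a step the paper leaves implicit (that $q-2$ must be the \emph{first} positive non-gap) and add the alternative justification of $\epsilon_2=2$ via the Cossidente--Korchm\'aros--Torres classification.
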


\end{document}